\newcommand{\cE}{{\mathcal E}}
\newcommand{\cP}{{\mathcal P}}
\newcommand{\cX}{{\mathcal X}}
\newcommand{\cO}{{\mathcal O}}
\newcommand{\cI}{{\mathcal I}}
\newcommand{\cM}{{\mathcal M}}
\newcommand {\PP}{\mathbb{P}}
\newcommand{\cH}{{\mathcal H}}
\newcommand{\deb}{\overline{\partial}}
\newcommand{\K}{K}
\newcommand{\eps}{\varepsilon}
\newcommand {\QQ}{\mathbb{Q}}
\DeclareMathOperator{\EExt}{{\cE}xt}
\DeclareMathOperator{\Hilb}{Hilb}
\DeclareMathOperator{\Hom}{Hom}
\DeclareMathOperator{\Def}{Def}
\DeclareMathOperator{\at}{at}
\DeclareMathOperator{\Art}{{\mathcal{A}}rt}
\DeclareMathOperator{\Set}{{\mathcal{S}}et}
\DeclareMathOperator{\id}{id}
\DeclareMathOperator{\Ext}{Ext}
\DeclareMathOperator{\Spec}{Spec}
\DeclareMathOperator{\coker}{coker}
\DeclareMathOperator{\pt}{pt}
\newtheorem{theorem}{Theorem}[section]
\newtheorem{lemma}[theorem]{Lemma}
\newtheorem{proposition}[theorem]{Proposition}
\newtheorem{corollary}[theorem]{Corollary}
 \theoremstyle{definition}
\newtheorem{definition}[theorem]{Definition}
\newtheorem{remark}[theorem]{Remark}
\newtheorem{Notation}[theorem]{Notation}
\newcommand{\red}[1]{{\color{red} \sf $\clubsuit$ [#1]}}
\title{Smoothings of lci proper schemes}
\date{\today}
\begin{document}

 \author[B.\ Fantechi]{Barbara Fantechi}
 \address{SISSA
Via Bonomea 265, I-34136 Trieste, Italy}
  \email{fantechi@sissa.it,
  ORCID 0000-0002-7109-6818}.
  \author[R.\ M.\ Mir\'o-Roig]{Rosa M.\ Mir\'o-Roig}
  \address{Facultat de
  Matem\`atiques i Inform\`atica, Universitat de Barcelona, Gran Via des les
  Corts Catalanes 585, 08007 Barcelona, Spain} \email{miro@ub.edu, ORCID 0000-0003-1375-6547}

\thanks{The first author has been partially supported by PRIN 2022BTA242 “Geometry of algebraic structures: moduli, invariants,
deformations” and INdAM-GNSAGA}

\thanks{The second author has been partially supported by the grant PID2020-113674GB-I00}

\date{\today}

\begin{abstract}
We give criteria for the existence of geometric smoothings of a proper lci scheme or a DM stack $X$ as well as for a polarized lci scheme $(X,L)$, without assuming that $X$ is reduced. As applications, we give criteria for the smoothability of polarized K3 surfaces and of stable varieties.
\end{abstract}

\maketitle

\tableofcontents

\section{Introduction}

In a recent work, the authors established a criterion to ensure the smoothability of a polarized K3 surface $(X,L)$ with isolated  singularities \cite[Theorem 5.12]{FM}. This paper is motivated by our interest in extending this result to polarized schemes with singularities that are not necessarily isolated, and even to polarized schemes that are not necessarily reduced.

\begin{definition}

A {\em geometric smoothing} of $X$ is a Cartesian diagram
\[
\xymatrix{ X \ar[r]^{i} \ar[d] & \mathcal{X} \ar[d]^{\pi }\\
c\ar[r] & C
}
\]
where $C$ is a smooth irreducible curve, $c\in C$ is a closed point and $\pi $ is
a flat and proper morphism, such that $\pi $ is generically smooth. We say
that $X$ is {\em geometrically smoothable} if it has a geometric smoothing.
\end{definition}

The existence of geometric smoothings has been established in the literature for varieties with isolated singularities, see for instance \cite[Proposition 6.4]{Wa} and  \cite[Lemma 1]{Ma}.
For non-isolated singularities, there is a necessary criterion, the nonvanishing of the space of global sections of the sheaf  $T^1_X:= \EExt^1(L_X,\cO_{X})$ where $L_X$ is the cotangent complex of $X$ (note that if $X$ is reduced, the natural map $L_X\to \Omega_X$ is an isomorphism), but sufficient conditions are difficult to obtain.

A key role in studying the infinitesimal deformations of $X$, which will also be crucial in our approach, is the long exact sequence \begin{equation}
\label{long_ex_seq}
0\to H^1(T_X)\to \Ext^1(L_X,\cO_X)\to H^0(T^1_X)\to
\end{equation}
$$H^2(T_X)\to \Ext^2(L_X,\cO_X)\to H^1(T^1_X).
$$

Here $\Ext^1(L_X,\cO_X)$ is the vector space of first order deformations, $H^1(T_X)$ the same for locally trivial deformations. Therefore, it follows that $H^0(T^1_X)=0$ implies that all deformations are locally trivial, which means that $X$ is not geometrically smoothable.

\vskip 2mm

Our main result, proven in Section \ref{Proof_of_main_thm}, is the following theorem:

\begin{theorem}\label{main_theorem}
    Given a flat proper family $\pi:X_M\to M$ with $M$ irreducible, if there is $m_0\in M$ such that \begin{enumerate}
    \item $M$ is smooth at $m_0$,
    \item the fiber $X:=X_{m_0}:=\pi^{-1}(m_0)$ has lci singularities,
    \item the induced morphism $T_{m_0}M\otimes \cO_X\to T^1_X$ is surjective;
\end{enumerate}
then the open subset $U\subset M$ such that $\pi:\pi^{-1}(U)\to U$ is smooth is nonempty. In other words, the general fibre $X_m$ is smooth and $X$ admits a geometric smoothing obtained as base change from $\pi:X_M\to M$.
\end{theorem}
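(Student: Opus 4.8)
The plan is to show that the locus where $\pi$ fails to be smooth is a proper closed subset. Since $M$ is irreducible and the smooth locus $U$ is open, it suffices to prove $U\neq\emptyset$, i.e. that the general fiber is smooth. The strategy is to reduce to a local, pointwise statement: smoothness of $\pi$ at a point of $X_m$ can be detected via the $T^1$ sheaf of the fiber, and the hypothesis (3) is exactly a transversality condition at $m_0$ that should propagate to nearby fibers and force the singular $T^1$ to die generically.

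Let me sketch the key steps. First, I would reduce to the case where $M$ is a smooth curve by choosing a general smooth curve $C\subset M$ through $m_0$ (using that $M$ is smooth at $m_0$) whose tangent direction is chosen so that the composite $T_{m_0}C\otimes\cO_X\to T_{m_0}M\otimes\cO_X\to T^1_X$ remains surjective. For this to be possible I need surjectivity to be achievable from a one-dimensional subspace of the tangent space; since $T^1_X$ is a coherent sheaf supported on the singular locus, and hypothesis (3) says finitely many sections (images of a basis of $T_{m_0}M$) generate it, I would argue one can find a single tangent vector whose image generates $T^1_X$ — or more carefully, pass to a curve and use that the needed surjectivity is an open/generic condition on the choice of $C$. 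This is where I expect the first subtlety, since $T^1_X$ need not be generated by a single global section; it may be cleaner to keep $M$ of arbitrary dimension and argue directly.

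The heart of the argument is the following local claim, which I would prove using lci deformation theory. Let $x\in X=X_{m_0}$ be a point. Since $X$ has lci singularities, the fiber is locally a complete intersection, $T^1_X$ is the standard normal-bundle-type sheaf (the cokernel of the conormal map), and the local-to-global story is governed by the sequence \eqref{long_ex_seq}. The key point is that for a flat family $\pi:X_M\to M$, the Kodaira–Spencer type map $T_{m_0}M\otimes\cO_X\to T^1_X$ measures, pointwise, whether the total space $\mathcal X_M$ is smooth transverse to the singular directions of the fiber. Concretely, I would show: if the map $T_{m_0}M\otimes\cO_X\to T^1_X$ is surjective at $x$, then the total space $X_M$ is smooth at $x$. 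Indeed, for an lci singularity one can present $X$ locally as the zero locus of $f_1,\dots,f_k$ in a smooth ambient space, the family deforms these equations over $M$, and surjectivity of the map onto $T^1_X$ (the cokernel of $(df_i)$) means the first-order deformation spans the normal directions, so the Jacobian of the total family has full rank — hence $X_M$ is smooth at $x$ by the Jacobian criterion.

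Granting that $X_M$ is smooth along $X$ (in a neighborhood of $X$ in $X_M$), smoothness of the general fiber follows from generic smoothness: a flat proper morphism from a smooth (hence, near $X$, regular) total space to a smooth base $M$ over a field of characteristic zero has smooth general fiber by generic smoothness (Sard/Bertini), so $U\neq\emptyset$. I expect the main obstacle to be the local Jacobian computation identifying the Kodaira–Spencer map with the normal-direction map and deducing smoothness of $X_M$ from hypothesis (3); one must carefully track that lci-ness guarantees $T^1_X=\coker((df_i))$ and that the family's horizontal derivatives hit exactly this cokernel. A secondary point to verify is that ``surjective as a map of sheaves'' gives surjectivity on fibers/stalks at every $x\in X$ (by Nakayama), so the local smoothness holds at every point of $X$, giving smoothness of $X_M$ in a whole neighborhood of $X$ and not merely at $m_0$.
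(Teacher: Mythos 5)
Your proposal is correct and follows essentially the same route as the paper: hypothesis (3) forces $T^1_{X_M}$ to vanish along $X$ (you see this via the local Jacobian of an lci presentation; the paper packages the same computation coordinate-freely via the triangle $\pi^*L_M\to L_{X_M}\to L_\pi$ and base change for $T^1$ of flat lci morphisms), hence $X_M$ is smooth near $X$, and generic smoothness in characteristic zero finishes the argument. The one step you leave implicit, which the paper makes explicit, is the use of properness of $\pi$ to upgrade ``$X_M$ smooth on a neighborhood of $X$'' to ``$X_M$ smooth on $\pi^{-1}(U')$ for some open $U'\ni m_0$'' (the non-smooth locus is closed, so its image is closed and misses $m_0$) before invoking generic smoothness; your opening detour through a curve $C\subset M$ is unnecessary, as you yourself note.
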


The morphism in (3) is obtained by composing the Kodaira Spencer map $T_{m_0}M\to \Ext^1(L_X,\cO_X)$ with the map $\Ext^1(L_X,\cO_X)\to H^0(T^1_X)$ from the exact sequence (\ref{long_ex_seq}), then tensoring with $\cO_X$ and composing with the evaluation morphism $H^0(T^1_X)\otimes \cO_X\to T^1_X$. In the proof of step (3) of Lemma \ref{key_lemma} we give a more direct construction of this map when $X$ is lci.

We also prove two generalizations of Theorem \ref{main_theorem}, namely that one can weaken assumption (3) to estimate the dimension of the singular locus in a generic fiber in Proposition \ref{singularity_estimate}, and that the theorem as stated applies also when $M$, $X_M$, or both are algebraic stacks in the sense of Deligne and Mumford in Proposition \ref{main_thm_stacks}.

The remainder of this paper focuses on applications of the main results.

In Section \ref{Geometric_smoothability_criteria}, we get sufficient conditions for the existence of a geometric smoothing for proper lci schemes and DM stacks, Theorem \ref{smoothable_Fano_gentype}, and for polarized projective lci schemes, Theorem \ref{smoothing_polarized}.

In particular, we prove the following.

\begin{theorem}\label{smoothability_criteria} Let $X$ be a proper lci pure dimensional scheme.
Assume  $H^2(X,T_X)=H^1(X,T^1_X)=0$, and $T^1_X$ is generated by global sections. If $\omega _X$ is ample or $\omega _X^{\vee}$ is ample or $H^2(X,\cO_X)=0$,  then $X$ admits a geometric smoothing.
\end{theorem}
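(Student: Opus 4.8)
The plan is to deduce the statement from Theorem \ref{main_theorem} by exhibiting a flat proper family $\pi\colon X_M\to M$ with $M$ irreducible and a point $m_0\in M$ whose fibre is $X$, satisfying the three hypotheses there. Hypothesis (2) holds by assumption, so everything reduces to producing $M$ smooth at $m_0$ (hypothesis (1)) for which $T_{m_0}M\otimes\cO_X\to T^1_X$ is surjective (hypothesis (3)).

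The first step is to see that deformations of $X$ are unobstructed. Because $X$ is lci we have $\EExt^q(L_X,\cO_X)=0$ for $q\ge 2$, so the local-to-global spectral sequence has only two nonzero rows and produces, beyond (\ref{long_ex_seq}), the exact piece
\begin{equation*}
H^2(T_X)\longrightarrow \Ext^2(L_X,\cO_X)\longrightarrow H^1(T^1_X).
\end{equation*}
The hypotheses $H^2(X,T_X)=H^1(X,T^1_X)=0$ then give $\Ext^2(L_X,\cO_X)=0$. Since this group is the obstruction space for deformations of $X$, the deformation functor of $X$ is smooth; in particular its semiuniversal formal deformation has regular base, with tangent space $\Ext^1(L_X,\cO_X)$ and Kodaira--Spencer map an isomorphism.

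The second, and crucial, step is to algebraize this formal deformation to a finite-type family, and this is exactly where the three alternative positivity hypotheses enter, each furnishing a line bundle on the universal family whose restriction to $X$ is ample. If $\omega_X$ (resp. $\omega_X^{\vee}$) is ample, the relative dualizing sheaf $\omega_{X_M/M}$ (resp. its dual), which is invertible for any lci family and restricts to $\omega_X$ (resp. $\omega_X^{\vee}$) on the central fibre, already is such a sheaf. If instead $H^2(X,\cO_X)=0$, I would fix a very ample $L$ on $X$: the obstruction to extending $L$ along an infinitesimal deformation of $X$ lies in $H^2(X,\cO_X)=0$, so $L$ propagates to a compatible system of line bundles on the universal family. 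With such a polarization, Grothendieck's existence theorem together with Artin approximation algebraizes the semiuniversal deformation to a proper flat family $\pi\colon X_M\to M$ of finite type with $X_{m_0}=X$ and $\cO_{M,m_0}$ regular; passing to the component through $m_0$ makes $M$ irreducible, and when $X$ is not projective one works with an algebraic-space or DM-stack base as in Proposition \ref{main_thm_stacks}.

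The final step is hypothesis (3). By construction the Kodaira--Spencer map $T_{m_0}M\to\Ext^1(L_X,\cO_X)$ is an isomorphism, and (\ref{long_ex_seq}) gives a surjection $\Ext^1(L_X,\cO_X)\twoheadrightarrow H^0(T^1_X)$ because $H^2(X,T_X)=0$. Composing, tensoring with $\cO_X$, and post-composing with the evaluation $H^0(T^1_X)\otimes\cO_X\to T^1_X$, which is surjective since $T^1_X$ is globally generated, shows $T_{m_0}M\otimes\cO_X\to T^1_X$ is surjective. Theorem \ref{main_theorem} then yields the geometric smoothing. I expect the algebraization in the third step to be the main obstacle: it is the only place where properness alone is insufficient, and the role of the (anti)canonical ampleness or of $H^2(X,\cO_X)=0$ is precisely to supply the relatively ample polarization (or, failing projectivity, to justify working with an algebraic-space base) that turns the smooth formal deformation into an honest geometric family.
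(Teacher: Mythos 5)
Your proposal is correct in substance, and its first and last steps (unobstructedness of $\Def_X$ via the local-to-global spectral sequence and the two-row vanishing $\EExt^{q}(L_X,\cO_X)=0$ for $q\ge 2$; surjectivity of $T_{m_0}M\otimes\cO_X\to T^1_X$ from $H^2(T_X)=0$ plus global generation) coincide with what the paper does. Where you genuinely diverge is the middle step, producing the family $\pi\colon X_M\to M$. You algebraize the semiuniversal formal deformation via Grothendieck existence plus Artin approximation, using the relative dualizing sheaf (or the unobstructed extension of an ample $L$ when $H^2(\cO_X)=0$) to supply the needed polarization. The paper instead never algebraizes a formal object: it reduces to Theorem \ref{smoothable_Fano_gentype} and there takes $M$ inside the Hilbert scheme $W=\Hilb^P(\PP^N)$ of the embedding by $\cO_X(1)=L^{\otimes n}$ (respectively inside the closed locus $Z\subset W_{lci}$ where $\omega^{\otimes n}\cong\cO(1)$ in the (anti)canonically polarized cases), and proves smoothness of $W$ (or $Z$) at $w_0$ by showing that the forgetful transformations $\Def(X,\cO_X(1),v_0,\ldots,v_N)\to\Def(X,\cO_X(1))\to\Def_X$ are smooth, using $H^1(\cO_X(1))=0$ and $H^2(\cO_X)=0$ (or the canonical extension of $\omega_X$). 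The paper's route stays within finite-type schemes throughout, needs only surjectivity (not bijectivity) of the Kodaira--Spencer map $T_{w_0}W\to T^1\Def_X$, and yields a projective smoothing for free; your route is the more standard deformation-theoretic one but leans on Artin's algebraization machinery, which the authors deliberately avoid. One caution: all three alternative hypotheses are really being used to force projectivity (the paper's Theorem \ref{smoothable_Fano_gentype} assumes $X$ projective, and in the $H^2(\cO_X)=0$ case one must \emph{choose} an ample line bundle), so your parenthetical fallback to an algebraic-space base for non-projective $X$ is not needed and would not quite fit the framework of Proposition \ref{main_thm_stacks}, which generalizes the base and total space of the family rather than the polarization.
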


In subsection \ref{K3_polar_smoothing}, we give a criterion for smoothability  of a polarized singular K3 surface $(X,L)$  in the sense of \cite{FM}. This  criterion extends \cite[Theorem 5.12]{FM} to the case where $X$ has non-isolated singularities, and  even to the case where $X$ is non-reduced; and gives a broader  criterion even in the case of isolated singularities.

Finally, in Section \ref{smoothing_stable_surfaces}, we give a criterion for the geometric smoothability of a stable variety $S$ whose associated index cover stack $X$ is lci.

\vskip 4mm
The present work is inspired by Tziolas' breakthrough results in \cite{Tz}, and our assumptions are similar to his. In \cite[Definition 11.6]{Tz}, he introduces the notion of formal smoothing, which is weaker than geometric smoothing. He then shows that for a proper equidimensional reduced lci  scheme $X$ such that $H^2(X,T_X)=H^1(X,T^1_X)=0$ and $T^1_X$ is generated by global sections,  a formal smoothing always exists \cite[Theorem 12.5]{Tz}.

Tziolas' argument is different from ours and based on studying deformations for formal schemes. In fact, his paper contains many results of interest on a functor we do not study at all, the deformations of the formal completion of such schemes along the singular locus. Moreover, he only uses the properness assumption on $X$ to show that the functor of infinitesimal deformations has a hull, which is true in more general cases.

Later, Nobile gave a characterization of formal smoothing for lci reduced schemes  in \cite[Theorem 5.11]{N} which only depends on a finite order deformation and not the whole formal smoothing. He then used it to show that, if $X$ is an lci reduced proper scheme such that $\omega _X$ is ample or $\omega _X^{\vee}$ is ample or $H^2(X,\cO_X)=0$, then the existence of a formal smoothing does imply that of a geometric smoothing \cite[Theorem 5.7]{N}.

Note that our result has extra assumptions when compared with Tziolas'. On the other hand, it also applies when $X$ is nonreduced, thus extending Nobile's result. We remark that our proof is independent of Tziolas' work and in particular does not use formal schemes.

\vskip 4mm

\noindent  {\bf Conventions.} Throughout this paper all schemes and algebraic stacks are of finite type over a fixed algebraically closed field $\K$ of characteristic 0. A point will always be a closed point.

\vskip 4mm

\noindent  {\bf Acknowledgment.}
The first author would like to thank the Universitat de Barcelona for hospitality while part of this paper was written. The authors would like to thank Rita Pardini for useful conversations.

\section{Preliminaries}

In this section, we gather some basic results that we will need later
on and we fix relevant notation.

\subsection{Cotangent complex and lci morphisms}

We recall here for the reader's convenience the properties of the cotangent complex we will use in this paper.

For a morphism $p:Y\to B$ of schemes or DM stacks we denote by $L_p\in D^{\le 0}_{coh}(Y)$ the cotangent complex of $p$ and by $T^1_p:=\EExt^1(L_p,\cO_{Y})$.

Given a cartesian diagram
\[
\begin{tikzcd}
Y' \arrow{r}{f} \arrow{d}{q} & Y\arrow{d}{p}\\
B'\arrow{r}{g} & B
\end{tikzcd}
\]
there is an induced morphism $\mathbb{L}f^*L_p\to L_q$, which is an equivalence if either $p$ or $g$ are flat.

Given morphisms $f:X\to Y$ and $g:Y\to Z$, we have a distinguished triangle on $X$\[
\begin{tikzcd}
    \mathbb{L}f^*L_g\arrow{r} & L_{g\circ f} \arrow{r} & L_f \arrow{r} & \mathbb{L}f^*L_g[1].
\end{tikzcd}
\]

\begin{definition}
  A morphism $f:Y\to B$ is lci (\lq locally complete
intersection\rq)  if it locally factors as a (closed) regular embedding followed by a smooth morphism. This is equivalent to the cotangent complex $L_f$ being perfect in $[-1,0]$.
\end{definition}

\begin{remark}  If $f:Y\to B$ is flat and lci, then $T^1_f$ commutes with base change, see e.g.\ \cite[Theorem 2.11]{FFP}.
\end{remark}

Let $f:X\to Y$ be a morphism of schemes and $E$ a perfect complex on $X$. Then there is a well defined morphism in $D^-(X)$
\[\at_E:E\otimes E^\vee[-1]\to L_f\]
called the {\em Atiyah class} with good functorial properties, see for instance \cite[Section 1]{Kn}.

\subsection{Deformation functors}

\smallskip
We denote by $\Art$ the category of local Artinian $\K$-algebras $A$ with residue field $\K$, and by $\Set$ the category of sets.
In this paper, a {\em deformation functor} is a functor $F:\Art\to\Set$ such that $F(\K)$ is one point and which admits tangent space $T^1F$ and obstruction space $T^2F$ in the sense of \cite{FMa}.

The basic example is the functor $h_{X,x}$ associated to a point $x$ in $X$ a scheme or a DM stack. It is defined by\[
h_{X,x}(A):=\Hom(\cO_{X,x},A)=\Hom(\widehat\cO_{X,x},A).
\]
We have $T^1h_{X,x}=T_xX$ and $T^2h_{X,x}=T^1_X(x):=T^1_{X,x}\otimes \kappa(x)$.

In the special case where $X=\Spec \K$ and $x\in X$ is the unique point, we denote by $\pt$ the associated deformation functor. For every other deformation functor $F$, there are unique natural transformations $\pt\to F\to \pt$.

We list here the deformation functors we use in this paper, indicating tangent and obstruction spaces. We let $X$ be a proper scheme or DM stack, $E$ a vector bundle on $X$, and $s_1,\ldots,s_n$ global sections of $E$.

We define a deformation of $X$ over $A\in \Art$ to be a triple $(X_A,i,p)$ forming a cartesian diagram \[
\begin{tikzcd}
X \arrow{r}{i} \arrow{d}{} & X_A\arrow{d}{p}\\
\Spec \K\arrow{r}{} & \Spec A
\end{tikzcd}
\]
An isomorphism $\phi:(X_A,i,p)\to(X'_A,i',p')$ is an isomorphism $$\phi:X_A\to X'_A$$ such that $i'=\phi\circ i$ and $p=p'\circ \phi$. The deformation functor $\Def_X$ is defined by $\Def_X(A):=$ the set of isomorphism classes of deformations of $X$ over $A$, and $T^i\Def_X=\Ext^i(L_X,\cO_X)$.

We define a deformation of $(X,E)$ over $A\in \Art$ to be a tuple $(X_A,i,p,E_A,\alpha)$ where $(X_A,i,p)$ is a deformation of $X$ over $A$, $E_A$ is a vector bundle over $X_A$ and $\alpha:i^*E_A\to E$ is an isomorphism.
An isomorphism $(\phi,\psi):(X_A,i,p,E_A,\alpha)\to(X'_A,i',p',E'_A,\alpha')$ is a pair of an isomorphism $\phi:(X_A,i,p)\to(X'_A,i',p')$ and an isomorphism $\psi:\phi^*E'_A\to E_A$ such that $\alpha'=\alpha\circ i^*\psi$. The deformation functor $\Def(X,E)$ is defined by $\Def(X,E)(A):=$ the set of isomorphism classes of deformations of $(X,E)$ over $A$, and $T^i\Def(X,E)=\Ext^i(\cP_E,\cO_X)$ where $\cP_E\in D^-(X)$ is defined as the mapping cone of the Atiyah class $\at_E$.
Note that if $X$ is lci, then $\cP_E$ is perfect in $[-1,0]$.

We define a deformation of $(X,E,s_1,\ldots,s_n)$ over $A\in \Art$ to be a tuple $(X_A,i,p,E_A,\alpha,t_1,\ldots,t_n)$ where $(X_A,i,p,E_A,\alpha)$ is a deformation of $(X,E)$ over $A$, and $t_i$ are global sections of $E_A$ over $X_A$ such that, for $j=1,\ldots,n$, we have $\alpha^*i^*t_j=s_j$.
An isomorphism $$(\phi,\psi):(X_A,i,p,E_A,\alpha,t_1,\ldots,t_n)\to(X'_A,i',p',E'_A,\alpha',t'_1,\ldots,t'_n)$$ is an isomorphism $(\phi,\psi):(X_A,i,p,E_A,\alpha)\to(X'_A,i',p',E'_A,\alpha')$ such that, for $j=1,\ldots,n$, we have $\psi^*t_j'=t_j$. The deformation functor $\Def(X,E,s_1,\ldots,s_n)$ is defined by $\Def(X,E,s_1,\ldots,s_n)(A):=$ the set of isomorphism classes of deformations of $(X,E,s_1,\ldots,s_n)$ over $A$.

 A natural transformation $F\to G$ of deformation functors is {\em smooth} if, for every surjection $A\to B$ in $\Art$, the induced map \[
 F(A)\to G(A)\times_{G(B)}F(B)
 \]
is surjective. A morphism of schemes or DM stacks $f:X\to Y$ is smooth at $x\in X$ if and only if the induced natural transformation  $h_{X,x}\to h_{Y,f(x)}$ is smooth.

Note that if $F$ is a deformation functor and $T^2F=0$, then $F\to \pt$ is smooth. In particular, if $F=h_{X,x}$ and $T^2F=0$, then $X$ is smooth at $x$.

The forgetful morphism $\Def(X,E,s_1,\ldots,s_n)\to \Def(X,E)$ is smooth if $H^1(X,E)=0$.


\section{Proof of the Main Theorem and variations}\label{Proof_of_main_thm}

\subsection{Proof of the main theorem}

In this subsection we assume throughout the assumptions and notation of Theorem \ref{main_theorem}.

\begin{lemma}\label{key_lemma} By replacing $M$ with a dense open subset containing $m_0$, we can assume that: \begin{enumerate}
    \item $M$ is smooth;
    \item the morphism $f$ is lci, hence both $X_M$ and each fiber $X_{m}:=\pi^{-1}(m)$ have lci singularities;
    \item the total space $X_M$ of the family is smooth.
\end{enumerate}
\end{lemma}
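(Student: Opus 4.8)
The plan is to realize conclusions (1)--(3) of the lemma by three successive shrinkings of $M$, each removing a closed subset that avoids $m_0$; since $M$ is irreducible, every nonempty open subset is dense and still contains $m_0$, so the three conditions accumulate. For (1), the smooth locus of $M$ is open and, by hypothesis (1) of Theorem \ref{main_theorem}, contains $m_0$, so I simply replace $M$ by it. For (2), I would use flatness of $\pi$: the relative cotangent complex restricts on the central fibre to $L_{X/\K}$ (flat base change for the cotangent complex), so hypothesis (2), that $X=X_{m_0}$ is lci, forces $L_\pi$ to be perfect in $[-1,0]$ along $X_{m_0}$. Since the lci locus of a morphism is open, $\pi$ is lci on an open $W\supseteq X_{m_0}$; properness makes $\pi(X_M\setminus W)$ closed and disjoint from $m_0$, and after replacing $M$ by its complement $\pi$ is lci everywhere. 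Then $X_M$ is lci (composite of the lci $\pi$ with the smooth $M\to\Spec\K$) and each fibre is lci (base change of a flat lci morphism), both read off from the triangle $\mathbb{L}\pi^*L_M\to L_{X_M}\to L_\pi$.

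The heart of the proof is (3). Since $X_M$ is now lci, it is smooth exactly where its $T^1_{X_M}:=\EExt^1(L_{X_M},\cO_{X_M})$ vanishes, so it suffices to arrange $T^1_{X_M}=0$ after one more shrinking. I would apply $\EExt^\bullet(-,\cO_{X_M})$ to the triangle $\pi^*\Omega_M\to L_{X_M}\to L_\pi$ (here $L_M=\Omega_M$ is locally free because $M$ is smooth). As $\pi^*\Omega_M$ is locally free, its higher $\EExt$ vanish and $\EExt^0(\pi^*\Omega_M,\cO_{X_M})=\pi^*T_M$, so the long exact sequence collapses to the right-exact sequence
\[
\pi^*T_M \xrightarrow{\ \kappa\ } T^1_\pi \to T^1_{X_M}\to 0,
\]
identifying $T^1_{X_M}=\coker\kappa$. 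Applying the right-exact functor $i^*$ for the inclusion $i:X\hookrightarrow X_M$, and using $i^*\pi^*T_M=T_{m_0}M\otimes\cO_X$ together with the base-change identity $i^*T^1_\pi=T^1_X$ (valid since $\pi$ is flat and lci, by the base-change property of $T^1$ recalled above), I obtain
\[
T_{m_0}M\otimes\cO_X \to T^1_X \to i^*T^1_{X_M}\to 0.
\]
The first arrow here is precisely the morphism of hypothesis (3) --- this is the \emph{more direct construction} promised in the introduction --- and surjectivity then yields $i^*T^1_{X_M}=0$, i.e.\ $T^1_{X_M}\otimes\cO_X=0$. By Nakayama the coherent sheaf $T^1_{X_M}$ has vanishing stalk at every point of $X_{m_0}$, so its closed support is disjoint from the central fibre; a final appeal to properness removes $\pi(\mathrm{Supp}\,T^1_{X_M})$ from $M$, after which $T^1_{X_M}=0$ and $X_M$ is smooth.

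I expect step (3) to be the main obstacle. The three delicate points are: setting up the dualized triangle and checking that $\pi^*\Omega_M$ kills the higher terms; justifying via the Remark that $T^1_\pi$ commutes with base change so that $i^*T^1_\pi=T^1_X$; and, above all, verifying that the connecting map $T_{m_0}M\otimes\cO_X\to T^1_X$ arising here really coincides with the one in hypothesis (3), which I would establish from the functoriality of the cotangent triangle (matching it against the Kodaira--Spencer map composed with $\Ext^1(L_X,\cO_X)\to H^0(T^1_X)$ and evaluation). By contrast, the openness of the smooth and lci loci, and the repeated passage from vanishing on the central fibre to vanishing on a neighbourhood through properness and Nakayama, are routine, provided the shrinkings are carried out in the order (1), (2), (3) so that each step preserves what was gained before.
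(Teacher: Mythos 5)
Your proposal is correct and follows essentially the same route as the paper: shrink $M$ successively using openness of the smooth and lci loci plus properness, then dualize the triangle $\pi^*L_M\to L_{X_M}\to L_\pi$ to get $\pi^*T_M\to T^1_\pi\to T^1_{X_M}\to 0$, restrict to the central fibre, and conclude $T^1_{X_M}=0$ after one more shrinking. The paper likewise treats the restricted first arrow as the (more direct) construction of the map in hypothesis (3), so the identification you flag as delicate is exactly how the statement is set up there.
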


\begin{proof}
(1)
  The locus of smooth points is an open dense subset.

    (2)     Since $\pi$ is flat, the locus in $X_M$  of points $p$ where $\pi $ is lci at $p$ is open in $X_M$, see for instance \cite[\S 5]{N}. On the other hand,  since $\pi$ is proper, the locus of points $m$ in $M$ where $\pi$ is lci at every point of $X_m$ is open.

 (3)  We consider the distinguished triangle of perfect complexes \[
    \pi^*L_M\to L_{X_M} \to L_\pi\to \pi^*L_M[1].
    \]
    Its dual is also perfect and commutes with base change. Taking the long exact sequence in cohomology gives an exact sequence of coherent sheaves on $X_M$: \[
    \pi^*T_M \to T^1_\pi \to T^1_{X_M}\to 0=h^1(\pi^*L_M^\vee).
    \]
    Its restriction to $X_m$ becomes an exact sequence of coherent sheaves on $X_m$:
    \[
    T_mM\otimes \cO_{X_m}\to T^1_{X_m}\to T^1_{X_M}|_{X_m}\to 0.
    \]
    By assumption the first map is surjective on $X_{m_0}$, thus  $T^1_{X_M}|_{X_{m_0}}=0$. Therefore the support of $T^1_{X_M}$ is disjoint from $X_{m_0}$; it is also closed, so its image in $M$ is closed and does not contain $m_0$. We can replace $M$ by the complement of this image, with the result that $T^1_{X_M}=0$ which in turn means that $X_M$ is smooth, since for every $x\in X_M$, the obstruction space
    $T^2h_{{X_M},x}=T^1_{X_M}(x):=T^1_{{X_M},x}\otimes \kappa(x)$ to $h_{X_M,x}$ vanishes.
\end{proof}

Note that $\pi$ is necessarily surjective. Indeed,  $\pi(X_M)$ is closed because $\pi$ is proper, and  $\pi$ is open because $\pi$ is flat. Thus $\pi(X_M)$ is open in $M$. Since $M$ is irreducible, $\pi$ must be surjective.
\begin{proof}[Proof of Theorem \ref{main_theorem}]
     By replacing $M$ with a dense open subset containing $m_0$, we may assume that $X_M$ is smooth. This proves the theorem by applying Generic Smoothness \cite[Corollary III.10.7]{Ha}, which guarantees that for any morphism $\pi:X_M \to M$ of smooth varieties there exists an open nonempty subset $V\subset M$ such that $\pi:\pi^{-1}(V)\to V$ is smooth.
\end{proof}


\subsection{Variations}

\begin{proposition}\label{main_thm_stacks}
    Theorem \ref{main_theorem} also holds if $M$ and $X_M$ (and hence also $X$) are assumed to be Deligne Mumford stacks instead of schemes.
\end{proposition}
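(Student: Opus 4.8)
The plan is to show that the proof of Theorem \ref{main_theorem} is robust under replacing schemes by Deligne–Mumford stacks, by checking that each ingredient used has a stack-theoretic analogue. The overall architecture stays the same: reduce (via Lemma \ref{key_lemma}) to the case where $M$ and $X_M$ are smooth, then invoke Generic Smoothness to conclude that the general fiber is smooth and furnishes a geometric smoothing. So the real content is to verify that Lemma \ref{key_lemma} and the final Generic Smoothness step both survive the passage to DM stacks.

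First I would go through Lemma \ref{key_lemma} step by step. Step (1), that the smooth locus of $M$ is open and dense, holds for DM stacks since smoothness of a stack can be checked smooth-locally (or, equivalently, on an étale atlas), and the smooth locus of $M$ pulls back to the smooth locus of any atlas. Step (2) uses only that the lci locus of a flat morphism is open and that properness makes the fiberwise-lci locus open in the base; both of these are smooth-local on source and target, hence descend from the scheme case to stacks, and the openness of the image of a closed set under a proper representable (or proper) morphism holds for DM stacks as well. Step (3) is purely homological: it manipulates the distinguished triangle $\pi^*L_M\to L_{X_M}\to L_\pi\to \pi^*L_M[1]$, takes duals and long exact cohomology sequences of coherent sheaves, and uses base change for the perfect cotangent complex. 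The cotangent complex, its functorial triangle, perfectness of $L_\pi$ for lci morphisms, and base change all hold verbatim for DM stacks (the excerpt already states the relevant properties for stacks). The only point needing care is the final implication ``$T^1_{X_M}=0\Rightarrow X_M$ smooth,'' which goes through the vanishing of the obstruction space $T^2h_{X_M,x}=T^1_{X_M}(x)$; for a DM stack this deformation-theoretic criterion still detects smoothness at a point, since smoothness of a stack is tested on an atlas and the obstruction theory is étale-local.

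The remaining step is to replace the citation to Generic Smoothness \cite[Corollary III.10.7]{Ha}, which is stated for varieties, by its stacky version. Once $M$ and $X_M$ are smooth DM stacks over the characteristic-zero field $\K$, I would choose a smooth atlas and reduce the statement to the scheme-theoretic Generic Smoothness: concretely, a morphism of smooth DM stacks is smooth over a point exactly when the induced map on an étale (or smooth) presentation is smooth there, and generic smoothness for the presentation yields a nonempty open substack $V\subset M$ over which $\pi$ is smooth. Since $M$ is irreducible, $V$ is dense, and base-changing along a suitable map from a smooth curve (which exists because $V$ is a nonempty smooth DM stack in characteristic zero and admits a smooth curve mapping to it through a generic point) produces the geometric smoothing as before.

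I expect the main obstacle to be the final step, namely phrasing Generic Smoothness correctly for DM stacks and verifying that a genuine geometric smoothing—a family over a \emph{smooth irreducible curve}—can still be extracted. The homological content of Lemma \ref{key_lemma}(3) transfers essentially formally, but the notion of ``geometric smoothing'' in the Definition involves an honest smooth curve $C$, so one must argue that the nonempty smooth open substack $V$ of the generically-smooth family admits a map from a smooth curve meeting the smooth-fiber locus and passing (in a suitable sense) through $X$; this is where I would be most careful, using that in characteristic zero a smooth DM stack is generically a scheme (or a gerbe over one) on a dense open, so a smooth curve can be found mapping into that locus.
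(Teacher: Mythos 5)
Your proposal is correct and follows essentially the same route as the paper: verify that Lemma \ref{key_lemma} is \'etale-local and hence survives the passage to DM stacks, then deduce generic smoothness for $\pi$ from scheme-theoretic generic smoothness applied to an \'etale atlas of $X_M$ (the paper intersects the finitely many resulting opens $V_k\subset M$, using irreducibility of $M$, and descends smoothness along the surjective \'etale atlas map). The one organizational difference is that the paper's first step replaces $M$ itself by a local \'etale \emph{scheme} cover near $m_0$ --- which preserves the fibers and the smoothness hypothesis --- so the concern in your last paragraph about extracting a map from a smooth curve never arises: the curve is found inside a scheme exactly as in the original proof.
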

\begin{proof}
    As a first step, we reduce to $M$ being a scheme by passing to a local \'etale cover of $M$ near $m_0$ which is a scheme. This does not change the fibers and does not change the smoothness at $m_0$ condition.

     The rest of the proof of the proposition is the same, hence after restriction we may assume $X_M$ to be smooth. By definition of DM stack, we can find a morphism $g:U\to X_M$ such that $U$ is a scheme of finite type over $\K$, $g$ is representable, surjective and \'etale; we denote by $\tilde \pi:=\pi\circ g$. Thus $U$ is a disjoint union of finitely many smooth varieties $U_k$. For each $k$, generic smoothness gives us a nonempty open $V_k$ in $M$ such that $\tilde\pi:\tilde\pi^{-1}(V_k)\cap U_k\to V_k$ is smooth. Let $V$ be the intersection of the $V_k$, which is nonempty because $M$ is irreducible. Then $\tilde\pi:\tilde\pi^{-1}(V)\to V$ is smooth. Since $g:\tilde\pi^{-1}(V)\to \pi^{-1}(V)$ is \'etale and surjective, it follows that $\pi:\pi^{-1}(V)\to V$ is smooth.
\end{proof}

     We can also prove a weaker version of Theorem \ref{main_theorem} by showing that the dimension of the support of $\coker (T_{m_0}M\otimes \cO_X\to T^1_X)$ estimates the dimension of the singular locus of the general fiber.

\begin{proposition}\label{singularity_estimate}
    Given a flat proper family $\pi:X_M\to M$ with $M$ irreducible, if there is $m_0\in M$ such that \begin{enumerate}
    \item $M$ is smooth at $m_0$,
    \item the fiber $X:=X_{m_0}:=\pi^{-1}(m_0)$ has lci singularities,
    \item the support of $\coker (T_{m_0}M\otimes \cO_X\to T^1_X)$ has dimension $d$;
    \end{enumerate}
 then the general fiber $X_m$ is lci and singular in dimension at most $d$.
\end{proposition}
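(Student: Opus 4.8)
The plan is to mimic the proof of Theorem \ref{main_theorem} via Lemma \ref{key_lemma}, but to track the cokernel sheaf rather than forcing it to vanish. First I would shrink $M$ to a dense open neighborhood of $m_0$ so that, as in steps (1) and (2) of Lemma \ref{key_lemma}, $M$ is smooth and $\pi$ is lci; this uses only assumptions (1) and (2) together with the openness of the smooth and lci loci. At this stage every fiber $X_m$ is lci, and the exact sequence of coherent sheaves on $X_M$ obtained by dualizing the distinguished triangle $\pi^*L_M\to L_{X_M}\to L_\pi\to \pi^*L_M[1]$ gives
\begin{equation*}
\pi^*T_M\to T^1_\pi\to T^1_{X_M}\to 0.
\end{equation*}
Because $\pi$ is flat and lci, $T^1_\pi$ commutes with base change, so restricting to any fiber $X_m$ recovers the sequence $T_mM\otimes\cO_{X_m}\to T^1_{X_m}\to T^1_{X_M}|_{X_m}\to 0$, whence $T^1_{X_M}|_{X_m}=\coker(T_mM\otimes\cO_{X_m}\to T^1_{X_m})$.

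Next I would exploit assumption (3): the coherent sheaf $\cZ:=T^1_{X_M}$ on $X_M$ restricts on $X=X_{m_0}$ to the cokernel whose support has dimension $d$. The key point is that the support of $\cZ$ is closed in $X_M$, and its fiber over $m_0$ has dimension $d$. By upper semicontinuity of fiber dimension for the proper (hence closed) restriction $\pi|_{\mathrm{Supp}\,\cZ}:\mathrm{Supp}\,\cZ\to M$, I can shrink $M$ to a further dense open so that for all $m$ in this open the fiber $\mathrm{Supp}\,\cZ\cap X_m$ has dimension at most $d$; equivalently $\dim\mathrm{Supp}(T^1_{X_M}|_{X_m})\le d$. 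The remaining task is to relate the singular locus of $X_m$ to this support. Since $X_m$ is lci, its singular locus is exactly the support of $T^1_{X_m}$, and the exact sequence above exhibits $\mathrm{Supp}(T^1_{X_M}|_{X_m})$ as the locus where the natural map $T_mM\otimes\cO_{X_m}\to T^1_{X_m}$ fails to be surjective.

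The geometric heart of the argument is therefore to show that on a generic fiber the singular locus of $X_m$ is contained in $\mathrm{Supp}(T^1_{X_M}|_{X_m})$, i.e.\ that away from this support the fiber is smooth. This follows by the same smoothness argument as in step (3) of Lemma \ref{key_lemma}: at a point $x\in X_m$ where $T^1_{X_M}|_{X_m}$ vanishes, the stalk $T^1_{X_M,x}=0$, so the obstruction space $T^2h_{X_M,x}$ vanishes and $X_M$ is smooth at $x$; then generic smoothness of $\pi:X_M\to M$ over the smooth locus of $X_M$ makes $X_m$ smooth at $x$ for general $m$. I would package this by applying Theorem \ref{main_theorem} or its proof to the open subscheme $X_M\setminus\mathrm{Supp}\,\cZ$, or more directly by noting that the total space is smooth along the complement of $\mathrm{Supp}\,\cZ$ and invoking generic smoothness there. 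Combining with the dimension bound on $\mathrm{Supp}(T^1_{X_M}|_{X_m})$ yields that $X_m$ is lci and singular in dimension at most $d$.

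I expect the main obstacle to be the bookkeeping of how the singular locus, which is a priori a statement about the smoothness of $\pi$ (relative singularities of $X_m$), compares with the support of the \emph{absolute} $T^1$ sheaves; one must be careful that ``singular in dimension at most $d$'' refers to the singular locus of the fiber $X_m$ as an abstract scheme, and to verify that generic smoothness applies after removing the support of $\cZ$ so that the only contribution to the singularities of the generic fiber comes from $\mathrm{Supp}\,\cZ$. The semicontinuity step controlling the fiber dimension of $\mathrm{Supp}\,\cZ$ over $M$ is routine once properness is invoked, but it is essential that we only claim the bound on a \emph{general} fiber, after possibly shrinking $M$.
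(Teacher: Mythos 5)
Your proposal is correct and follows essentially the same route as the paper: shrink $M$ using steps (1)--(2) of Lemma \ref{key_lemma}, identify the closed locus $Z=\mathrm{Supp}\,T^1_{X_M}$ (the singular locus of the now-lci total space) whose fiber over $m_0$ is the support of the cokernel, bound $\dim(Z\cap X_m)\le d$ for general $m$, and apply generic smoothness to the non-proper family $X_M\setminus Z\to M$. The only cosmetic difference is that you invoke Chevalley upper semicontinuity of fiber dimension plus properness directly, whereas the paper bounds $\dim Z_a\le d+\dim M$ component by component after arranging that each component of $Z$ dominates $M$ --- these are the same argument.
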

\begin{proof}
    The proof of (1) and (2) in Lemma \ref{key_lemma} is still valid. Let $Z$ be the singular locus of $X_M$. By restricting $M$, we can assume that each irreducible component of $Z$ surjects on $M$. Let $Z_1,\ldots ,Z_k$ be the irreducible components of $Z$; we know that $\dim Z_a\cap X_{m_0}\le d$. This implies that $\dim Z_a\le d+\dim M$. By restricting $M$, we may assume that $\dim (Z_a\cap X_m)\le d$ for every $m\in M$. Write $X_M':=X_M\setminus Z$; then we can apply generic smoothness to $\pi:X_M'\to M$ (it does not have properness as assumption) and find a dense open subset $V\subset M$  such that $\pi:\pi^{-1}(V)\cap X_M'\to V$ is smooth. Then for every $m\in V$, the singular locus of $X_m$ is contained in $Z\cap X_m$ and thus has at most dimension $d$.
\end{proof}


\section{Geometric smoothability criteria}\label{Geometric_smoothability_criteria}

In this section, as application of our main result, we will get sufficient conditions for the
existence of a geometric smoothing for proper lci schemes and DM
stacks,  and for polarized projective lci schemes.

\begin{theorem}\label{smoothable_Fano_gentype}
    Let $X$ be a lci projective scheme such that $\Def_X$ is unobstructed and $T^1\Def_X\otimes \cO_X\to T^1X$ is surjective. If either $\omega _X$ is ample  or $\omega_X^{\vee}$  is ample or $H^2(X,\cO_X)=0$, then $X$ admits a geometric projective smoothing.
\end{theorem}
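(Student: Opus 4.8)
The plan is to realize the abstract deformation theory of $X$ by an honest projective family over a base which is smooth at the point representing $X$, and then to quote Theorem \ref{main_theorem}. Recall from the preliminaries that $T^i\Def_X=\Ext^i(L_X,\cO_X)$, so unobstructedness of $\Def_X$ means that its hull is formally smooth; the associated (mini)versal formal deformation $\widehat X\to\widehat M$ then lives over a complete regular local ring $\K[[t_1,\dots,t_n]]$ with $n=\dim_\K\Ext^1(L_X,\cO_X)$, and its Kodaira--Spencer map identifies the tangent space of $\widehat M$ with $T^1\Def_X$.

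First I would equip this formal family with a relative polarization, distinguishing the three cases. If $\omega_X$ (resp.\ $\omega_X^\vee$) is ample, then, since $X$ is lci, the relative dualizing sheaf $\omega$ of the family is an invertible sheaf commuting with base change and restricting to $\omega_X$ on the central fibre; as ampleness is an open condition, $\omega^{\otimes k}$ (resp.\ its dual) is relatively ample near the centre for $k\gg 0$ and polarizes the family. If instead $H^2(X,\cO_X)=0$, I fix any ample line bundle $L$ on $X$: the obstruction to deforming a line bundle along a deformation of $X$ lies in $H^2(X,\cO_X)$, so (in the spirit of the smoothness criterion for $\Def(X,E,s_1,\dots,s_n)\to\Def(X,E)$ recorded in the preliminaries) the forgetful morphism $\Def(X,L)\to\Def_X$ is smooth. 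Hence $\Def(X,L)$ is again unobstructed, and replacing $\Def_X$ by $\Def(X,L)$ produces a versal family carrying a line bundle restricting to $L$, which is relatively ample near the centre. In all three cases I obtain a formal deformation together with a relatively ample line bundle, whose Kodaira--Spencer map surjects onto $\Ext^1(L_X,\cO_X)=T^1\Def_X$ (an isomorphism in the first two cases, surjective in the third by smoothness of $\Def(X,L)\to\Def_X$).

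Next I would algebraize: by Grothendieck's existence theorem the polarized formal family is effective, and by Artin's algebraization theorem it is induced from a flat projective family $\pi\colon X_M\to M$ over a finite type $\K$-scheme $M$, with a point $m_0$ where $\widehat{\cO}_{M,m_0}$ is regular, hence $M$ smooth at $m_0$, and with $X_{m_0}\cong X$ and surjective Kodaira--Spencer map $T_{m_0}M\twoheadrightarrow\Ext^1(L_X,\cO_X)$. Restricting $M$ to the irreducible component through $m_0$, I would apply Theorem \ref{main_theorem}: condition (1) holds by construction and (2) because $X$ is lci, while (3) follows by combining this surjectivity with the hypothesis that $T^1\Def_X\otimes\cO_X\to T^1_X$ is surjective. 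Indeed, tensoring the surjection $T_{m_0}M\twoheadrightarrow T^1\Def_X$ with $\cO_X$ and composing with the evaluation map $T^1\Def_X\otimes\cO_X\to T^1_X$ coming from $(\ref{long_ex_seq})$ yields exactly the surjection required in (3). Theorem \ref{main_theorem} then gives a nonempty open $U\subset M$ over which $\pi$ is smooth; a smooth curve through $m_0$ meeting $U$ gives, after base change, a geometric smoothing of $X$, which is projective since $\pi$ is.

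I expect the main obstacle to be the algebraization step, and specifically the verification that each of the three hypotheses genuinely furnishes a relatively ample line bundle on the versal family: the ample-canonical cases rest on the fact that the relative dualizing sheaf of an lci family is invertible and compatible with base change, while the case $H^2(X,\cO_X)=0$ rests on the smoothness of $\Def(X,L)\to\Def_X$. Once such a polarization is in hand, Grothendieck existence and Artin approximation deliver a genuinely projective (rather than merely proper algebraic space) family, and the remaining reduction to Theorem \ref{main_theorem} is purely formal.
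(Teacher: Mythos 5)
Your proof is correct, but it reaches the algebraic family by a genuinely different mechanism than the paper. You build the family abstractly: take the formal versal deformation of $\Def_X$ (or of $\Def(X,L)$ when $H^2(X,\cO_X)=0$), polarize it by the relative dualizing sheaf of the lci family or by a deformation of $L$, then invoke Grothendieck existence and Artin algebraization to land on a finite type base smooth at $m_0$ with surjective Kodaira--Spencer map, and finish with Theorem \ref{main_theorem}. The paper instead works directly on the Hilbert scheme $W=\Hilb^P(\PP^N)$ of the embedding given by $|L^{\otimes n}|$: it identifies $\widehat\cO_{W,w_0}$ with the prorepresenting ring of $\Def(X,\cO_X(1),v_0,\ldots,v_N)$, uses $H^1(\cO_X(1))=0$ together with either $H^2(\cO_X)=0$ or the canonical nature of $\omega_X^{\otimes n}$ (via the closed locus $Z\subset W_{lci}$ where $\omega^{\otimes n}\cong\cO(1)$) to get smoothness of $W$ (resp.\ $Z$) at $w_0$ and surjectivity of $T_{w_0}W\to T^1\Def_X$, and then applies Theorem \ref{main_theorem} to the universal family. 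The deformation-theoretic inputs coincide (smoothness of the forgetful morphisms, openness of ampleness, invertibility and base-change compatibility of $\omega$ for lci families); what differs is that the Hilbert-scheme route avoids Grothendieck existence and Artin algebraization entirely and gets projectivity of the family for free, at the price of introducing the auxiliary locus $Z$ in the (anti)canonically polarized cases, whereas your route uses heavier machinery but treats the three cases more uniformly. One point worth making explicit if you write this up: Artin algebraization only guarantees agreement of the algebraized family with the formal versal one up to a prescribed finite order, but agreement to first order already yields both the regularity of $\widehat\cO_{M,m_0}$ and the surjectivity of the Kodaira--Spencer map, which is all that Theorem \ref{main_theorem} needs.
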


\begin{proof}

    We fix a line bundle $L$ on $X$ as follows: if $\omega_X$ is ample, we let $L:=\omega_X$; if  $\omega_X^\vee$ is ample, we let $L:=\omega_X^\vee$; if $H^2(X,\cO_X)=0$, we choose $L$ any ample line bundle on $X$. We fix a positive integer $n$ such that $L^{\otimes n}$ is very ample and all $H^i(X,L^{\otimes n})$ vanish for all $i>0$. We write $\cO_X(1):=L^{\otimes n}$ and set $N:=h^0(\cO_X(1))-1=\chi(\cO_X(1))-1$. We let $P$ be the associated Hilbert polynomial, defined by requiring $P(m):=\chi(\cO_X(m))$.

 We fix a projective basis $(v_0,\ldots,v_N)$ of $H^0(\cO_X(1))$. It defines a closed embedding of $X$ in $\PP^N$ with Hilbert polynomial $P$, that is a point $w_0$ in $W:=\Hilb^P(\PP^n)$. The completion $\widehat\cO_{W,w_0}$ represents the deformation functor $\Def(X,\cO_X(1),v_0,\ldots,v_N).$ We note that the forgetful natural transformation $$\Def(X,\cO_X(1),v_0,\ldots,v_N)\to \Def(X,\cO_X(1))$$ is smooth because $H^1(\cO_X(1))=0$.

 We do first the case where $H^2(\cO_X)=0$. Then the natural transformation $\Def(X,\cO_X(1))\to \Def_X$ is smooth as its obstructions space vanishes. Therefore,  $W$ is smooth at $w_0$.
 To apply Theorem \ref{main_theorem} we need to show that $T_{w_0}W\otimes\cO_X\to T^1_X$ is surjective. By assumption, we know that   \(T^1\Def_X\otimes \cO_X\to T^1_X\)
is surjective. Since $$\Def(X,\cO_X(1),v_0,\ldots,v_N)\to \Def(X)$$ is smooth, it induces a surjection on tangent spaces $T_{w_0}W\to T^1\Def_X$ which completes the argument.

 Therefore,  there is only one irreducible component $M$ of $W$ which contains $w_0$. We write $\pi:X_M\to M$ for the universal family on $W$ restricted to $M$.

 By Theorem \ref{main_theorem} there is an open dense subset $V$ in $M$ such that $\pi:\pi^{-1}(V)\to V$ is smooth, and a geometric smoothing of $X$ can be constructed by choosing a morphism $\phi:C\to M$ where $C$ is a smooth irreducible curve, $c_0\in C$ a closed point mapping to $w_0$, such that $\phi$ maps the generic point of $C$ to $V$. The smoothing is then given by $X_C=X_M\times_MC\to C$.

We will next do the case where $\omega_X$ is ample. The case where $\omega^\vee_X$ is ample can be obtained by switching $\omega_X$ with $\omega_X^\vee$ in the following argument.
We consider the locus $W_{lci}$ in $W$ parametrizing lci schemes (i.e., where the morphism from the universal family is lci); $W_{lci}$ is open in $W$ and  it contains $w_0$. Since lci schemes are Gorenstein, we can construct the locus $Z$ in $W_{lci}$ parametrizing $w$ such that $\omega_{X_w}^{\otimes n}\cong \cO_{X_w}(1)$; since the relative Picard scheme is separated, $Z$ is a closed subscheme of $W_{lci}$ containing $w_0$.

In analogy to the previous case, the completion $\widehat\cO_{Z,w_0}$ represents the functor $\Def(X,v_0,\ldots,v_N)$ where $(v_i)$ is a projective basis of $H^0(X, \omega_X^{\otimes n})$. We note that every infinitesimal deformation of $X$ is again lci, and hence it has a relative dualizing sheaf which canonically extends $\omega_X$ to an invertible sheaf on the deformation. Since we assumed that $H^1(\omega_X^{\otimes n})=0$, the forgetful natural transformation $\Def(X,v_0,\ldots,v_N)\to \Def_X$ is smooth,  and therefore $Z$ is nonsingular at $w_0$. We choose $M$ to be the unique irreducible component of $Z$ containing $w_0$ and we conclude arguing as in the previous case.
\end{proof}

\begin{proof}[Proof of Theorem \ref{smoothability_criteria}]
   The assumptions imply, by the local to global spectral sequence of $\Ext$, that $\Def_X$ is unobstructed. Indeed, the obstruction space is $\Ext^2(L_X,\cO_X)$ and both $H^2(T_X)$ and $H^1(X,T^1_X)$ are assumed to be zero; and  there is no $T^2_X$ since $X$ has lci singularities. The vanishing of $H^2(T_X)$ also implies that the morphism $T^1\Def_X:=\Ext^1(L_X,\cO_X)\to H^0(T^1_X)$ is surjective, which, coupled with $T^1_X$ being globally generated, means that \[
T^1\Def_X\otimes \cO_X\to T^1_X
\]
is surjective. The result then follows from Theorem \ref{smoothable_Fano_gentype}.
\end{proof}


\subsection{Smoothability for polarized  schemes} In this subsection, we study the smoothability of polarized  schemes with lci singularities. In the next, we will apply it to strengthen the previous result of the authors about the smoothability of polarized K3 surfaces.

\begin{theorem}\label{smoothing_polarized} Let $X$ be a projective scheme with lci singularities and $L$ an ample line bundle on $X$. Assume that the pair $(X,L)$ has unobstructed deformations, and that $T^1\Def_{(X,L)}\otimes\cO_X\to T^1X$ is surjective. Then $(X,L)$ admits a geometric smoothing.
\end{theorem}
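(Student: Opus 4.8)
The plan is to mimic the structure of the proof of Theorem \ref{smoothable_Fano_gentype}, but now tracking the polarization $L$ itself throughout, so that the geometric smoothing we produce is automatically a \emph{polarized} smoothing. The key point is to realize a suitable deformation space of the triple $(X,L)$ (together with enough sections to rigidify the embedding) as a scheme smooth at our base point, and then feed it into Theorem \ref{main_theorem}.

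Concretely, since $L$ is ample I would first fix $n\gg 0$ so that $\cO_X(1):=L^{\otimes n}$ is very ample and $H^i(X,\cO_X(1))=0$ for all $i>0$; set $N:=h^0(\cO_X(1))-1$ and let $P$ be the associated Hilbert polynomial. Choosing a projective basis $(v_0,\ldots,v_N)$ of $H^0(\cO_X(1))$ embeds $X$ in $\PP^N$ and gives a point $w_0\in W:=\Hilb^P(\PP^N)$, whose completion $\widehat\cO_{W,w_0}$ represents $\Def(X,\cO_X(1),v_0,\ldots,v_N)$. Because $H^1(\cO_X(1))=0$, the forgetful transformation $\Def(X,\cO_X(1),v_0,\ldots,v_N)\to\Def(X,\cO_X(1))$ is smooth. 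What distinguishes this case from Theorem \ref{smoothable_Fano_gentype} is that here the polarization is an arbitrary ample $L$, not determined by $\omega_X$; so rather than comparing $\Def(X,\cO_X(1))$ with $\Def_X$, I would work directly with $\Def(X,L)$ and its $n$-th-power relation to $\cO_X(1)$.

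The main steps are then: (i) observe that $\Def(X,\cO_X(1))$ and $\Def(X,L)$ are related by the $n$-th-power map $L\mapsto L^{\otimes n}$, which on tangent spaces is multiplication by $n$ on the $H^1(\cO_X)$-torsor part and hence an isomorphism in characteristic $0$, so deformations of $\cO_X(1)$ correspond bijectively to deformations of $L$; (ii) use the hypothesis that $(X,L)$ has unobstructed deformations to conclude that $\Def(X,\cO_X(1))$, and therefore $W$, is smooth at $w_0$; (iii) verify the hypotheses of Theorem \ref{main_theorem} for the universal family over the unique irreducible component $M\subset W$ through $w_0$. For (iii) the crucial thing is surjectivity of $T_{w_0}M\otimes\cO_X\to T^1_X$: the smoothness of the chain of forgetful maps gives a surjection $T_{w_0}W=T^1\Def(X,\cO_X(1),v_0,\ldots,v_N)\twoheadrightarrow T^1\Def(X,L)$, and composing with the hypothesized surjection $T^1\Def_{(X,L)}\otimes\cO_X\to T^1_X$ yields the required surjectivity after tensoring with $\cO_X$.

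Finally, applying Theorem \ref{main_theorem} produces a dense open $V\subset M$ over which the family is smooth; pulling back along a map $\phi:C\to M$ from a smooth irreducible curve $C$, with a point $c_0\mapsto w_0$ and generic point landing in $V$, gives the geometric smoothing $X_C=X_M\times_M C\to C$. Since $M\subset W$ carries the tautological relatively very ample $\cO_{X_M}(1)$, this smoothing is automatically polarized, and the $n$-th root recovers the deformation of $L$. I expect the main obstacle to be step (ii): carefully matching the deformation-theoretic bookkeeping so that the unobstructedness hypothesis on $(X,L)$ transfers to smoothness of the Hilbert scheme point $w_0$, i.e.\ checking that the $n$-th-power comparison between $\Def(X,L)$ and $\Def(X,\cO_X(1))$ is an isomorphism of deformation functors (not merely on tangent spaces) so that unobstructedness is preserved.
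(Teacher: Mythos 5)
Your proposal follows essentially the same route as the paper's own proof: embed via $\cO_X(1)=L^{\otimes n}$ into a Hilbert scheme, transfer unobstructedness of $\Def(X,L)$ to smoothness of $W$ at $w_0$ through the characteristic-zero isomorphism $\Def(X,L)\to\Def(X,L^{\otimes n})$ and the smooth forgetful map, check surjectivity onto $T^1_X$, and invoke Theorem \ref{main_theorem}. The point you flag as the main obstacle (that the $n$-th-power comparison is an isomorphism of functors, not just of tangent spaces) is exactly what the paper asserts, so your argument is correct and matches theirs.
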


\begin{proof}
 The proof proceeds in the same way as the proof of Theorem  \ref{smoothable_Fano_gentype} in the case $H^2(X,\cO_X)=0$, except now we show that $W$ is smooth at $w_0$ by considering the smooth forgetful morphism $$\Def(X,\cO_X(1),v_0,\ldots,v_N)\to \Def(X,\cO_X(1))$$
    and composing it with the isomorphism (it is an isomorphism because we are in characteristic zero) $$\Def(X,L)\to \Def(X,L^{\otimes n})=\Def(X,\cO_X(1)).$$
\end{proof}

Note that $L\otimes L^\vee$ is canonically isomorphic to $\cO_X$, hence
    the distinguished triangle \[
    \cO_X[-1]\to L_X\to \cP_L\to \cO_X
    \]
    induces
    a long exact sequence
    \[
    H^1(\cO_X)\to \Ext^1(\cP_L,\cO_X)\to \Ext^1(L_X,\cO_X)\to H^2(\cO_X)\to\]
    \[\to \Ext^2(\cP_L,\cO_X)\to \Ext^2(L_X,\cO_X)\to H^3(\cO_X).
    \]

A sufficient condition for the deformation functor $\Def(X,L)$ to be unobstructed is that $\Ext^2(\cP_L,\cO_X)=0$. This condition is equivalent to the surjectivity of  $\Ext^1(L_X,\cO_X)\to H^2(\cO_X)$ and the injectivity of  $\Ext^2(L_X,\cO_X)\to H^3(\cO_X)$.

\subsection{Smoothing polarized singular K3 surfaces}\label{K3_polar_smoothing}

In \cite{FM}, we define a connected projective Gorenstein scheme $X$ of dimension $2$ to be a {\em singular K3 surface} if $\omega_X=\cO_X$ and $H^1(\cO_X)=0$. In the rest of the subsection we will assume that $X$ is a singular K3 surface with lci singularities.

In \cite{FM} we discussed the smoothability  for pairs $(X,L)$ where $X$ is a lci K3 surface  with isolated singularities and $L$ is an ample line bundle and we give a sufficient criterion in order to assure the geometric smoothability of the pair $(X,L)$ \cite[Theorem 5.12]{FM}. In this subsection we want to find more general conditions for polarized smoothability.

\begin{remark}
    We have an exact sequence \[
    0\to \Ext^1(\cP_L,\cO_X)\to \Ext^1(L_X,\cO_X)\to
    H^2(\cO_X)\]\[\to \Ext^2(\cP_L,\cO_X)\to \Ext^2(L_X,\cO_X)\to 0.
    \]
\end{remark}

\begin{remark}\label{rmk}
     Let $X$ be a singular K3 surface with lci singularities, and $L$ a line bundle on $X$. Since the natural map \[
     \Ext^1(L_X,\cO_X)\times \Ext^1(\cO_X,L_X)\to H^2(\cO_X)
     \]
     is a perfect pairing by Serre duality, in analogy with the argument in  \cite[Lemma 5.10]{FM}; it follows that the map $$\alpha_L:\Ext^1(L_X,\cO_X)\to H^2(\cO_X)=\K$$ induced by the Atiyah class $\at_L$ is nonzero (and hence surjective) if and only if $\at_L\neq 0$. In particular it is nonzero if $L$ is ample.
     Note that this implies that there exist first order deformations of $X$ to which $L$ does not extend, and that $\Ext^2(\cP_L,\cO_X)\to \Ext^2(L_X,\cO_X)$ is an isomorphism.
\end{remark}
 \begin{proposition}\label{smoothing_K3}
     Let $X$ be a singular K3 surface with lci singularities. The assumptions of Theorem \ref{smoothing_polarized} are verified if  \begin{enumerate}
       \item $H^1(X,T^1_X)=0$;
       \item $H^0(T^1_X)\to H^2(X,T_X)$ is surjective;
       \item $T^1_X$ is generated by its global sections in the image in $H^0(T^1_X)$ of $\ker \alpha_L\subset \Ext^1(L_X,\cO_X)$.
        \end{enumerate}
 \end{proposition}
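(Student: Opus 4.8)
The plan is to verify directly the two hypotheses of Theorem \ref{smoothing_polarized} for the pair $(X,L)$, namely that $\Def_{(X,L)}$ is unobstructed and that $T^1\Def_{(X,L)}\otimes\cO_X\to T^1_X$ is surjective. Here $\Def_{(X,L)}$ is the functor $\Def(X,L)$ of the Preliminaries, so that $T^1\Def_{(X,L)}=\Ext^1(\cP_L,\cO_X)$ and $T^2\Def_{(X,L)}=\Ext^2(\cP_L,\cO_X)$. Everything will be read off from two long exact sequences already available in the excerpt: the sequence \eqref{long_ex_seq} relating the $H^i(T_X)$, the $\Ext^i(L_X,\cO_X)$ and the $H^i(T^1_X)$, and the sequence attached to the distinguished triangle $\cO_X[-1]\to L_X\to\cP_L\to\cO_X$, which for a K3 surface (using $H^1(\cO_X)=0$ and, since $X$ is a surface, $H^3(\cO_X)=0$) takes the five-term form displayed in the first Remark of this subsection.

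For unobstructedness I would use the criterion recalled in the excerpt: it suffices that $T^2\Def_{(X,L)}=\Ext^2(\cP_L,\cO_X)=0$. By Remark \ref{rmk}, ampleness of $L$ makes $\alpha_L$ surjective, which is exactly what forces the edge map $\Ext^2(\cP_L,\cO_X)\to\Ext^2(L_X,\cO_X)$ to be an isomorphism; so it is enough to prove $\Ext^2(L_X,\cO_X)=0$. This I extract from the tail
\[
H^0(T^1_X)\to H^2(T_X)\to\Ext^2(L_X,\cO_X)\to H^1(T^1_X)
\]
of \eqref{long_ex_seq}. Hypothesis (1) gives $H^1(T^1_X)=0$, so $H^2(T_X)\to\Ext^2(L_X,\cO_X)$ is surjective; hypothesis (2) gives that $H^0(T^1_X)\to H^2(T_X)$ is surjective, so by exactness the same map $H^2(T_X)\to\Ext^2(L_X,\cO_X)$ is zero. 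A map that is simultaneously surjective and zero has vanishing target, whence $\Ext^2(L_X,\cO_X)=0$ and $\Def_{(X,L)}$ is unobstructed.

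For the surjectivity hypothesis the point is to describe $T^1\Def_{(X,L)}$ and the map into $H^0(T^1_X)$ concretely. The forgetful transformation $\Def(X,L)\to\Def_X$ induces on tangent spaces the edge map $\Ext^1(\cP_L,\cO_X)\to\Ext^1(L_X,\cO_X)$ of the triangle above; by the first Remark this map is injective with image exactly $\ker\alpha_L$. The morphism $T^1\Def_{(X,L)}\to H^0(T^1_X)$ governing the surjectivity in Theorem \ref{smoothing_polarized} is then the composite of this inclusion with the map $\Ext^1(L_X,\cO_X)\to H^0(T^1_X)$ of \eqref{long_ex_seq}, so its image in $H^0(T^1_X)$ is precisely the image of $\ker\alpha_L$. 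Hypothesis (3) says exactly that this image globally generates $T^1_X$, which is the assertion that $T^1\Def_{(X,L)}\otimes\cO_X\to T^1_X$ is surjective. This concludes the verification.

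The routine calculations are the two diagram chases, and I expect the only genuinely delicate step to be the identification, via the functoriality of the Atiyah-class construction of $\cP_L$, of the abstract Kodaira--Spencer-type map $T^1\Def_{(X,L)}\to H^0(T^1_X)$ with the concrete composite $\ker\alpha_L\hookrightarrow\Ext^1(L_X,\cO_X)\to H^0(T^1_X)$; once the two exact sequences are placed in a compatible ladder this is forced, and the K3 hypotheses enter only through the shape of the second sequence and the surjectivity of $\alpha_L$ recorded in Remark \ref{rmk}.
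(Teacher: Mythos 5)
Your proposal is correct and follows essentially the same route as the paper's (much terser) proof: hypotheses (1) and (2) force $\Ext^2(L_X,\cO_X)=0$ via the tail of the sequence \eqref{long_ex_seq}, Remark \ref{rmk} then gives $\Ext^2(\cP_L,\cO_X)=0$ and hence unobstructedness, and the identification $T^1\Def(X,L)=\Ext^1(\cP_L,\cO_X)\cong\ker\alpha_L$ turns hypothesis (3) into exactly the required surjectivity. Your write-up simply makes explicit the diagram chases the paper leaves to the reader.
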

  \begin{proof}
      (1) and (2) together imply $\Ext^2(L_X,\cO_X)=0$, and by Remark \ref{rmk} the deformation functor $\Def(X,L)$ is unobstructed.
      Finally, condition (3) states that $T^1\Def(X,L)\otimes \cO_X\to T^1_X$ is surjective.
  \end{proof}

  \begin{remark}
       Notice that the hypothesis (1) and the first half of hypothesis (3) are automatically satisfied when we deal with isolated singularities.
  \end{remark}

\vskip 4mm

\begin{remark} In  \cite[Example  5.16]{FM} the authors construct an example of lci K3 surface with isolated singularities which is polarized smoothable, but to which our smoothability criterion in that paper does not apply: this was part of the motivation to seek broader criteria.

We will now check that the assumptions of Proposition \ref{smoothing_K3} apply in this case. By the remark, we only need to check (2) and the second half of (3). (2) follows by $\Ext^2(\Omega_X,\cO_X)=0$ which was proved for all quartics in $\mathbb{P}^3$ in   \cite[Lemma 5.14]{FM}.

Finally, $T^1_X=\cO_X(4)|_Z$ where $Z\subset X$ is the scheme theoretic singular locus, and the smooth morphism $\Hilb(\mathbb{P}^3)\to \Def(X,L)$ induces a surjection on the differentials $H^0(X,\cO_X(4))\to T^1\Def(X,L)$. This reduces the claim to showing that $|\cO_X(4)|$ is base point free.
\end{remark}

\begin{lemma}
    Assume that $X$ has isolated rational singularities and that $\Omega_X$ is torsion-free. Then $\Ext^2(L_X,\cO_X)=0$.
\end{lemma}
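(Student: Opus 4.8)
The plan is to reduce the statement to the vanishing of global Kähler $1$-forms $H^0(X,\Omega_X)$, and then to kill these by comparison with the minimal resolution, which will turn out to be a smooth K3 surface. First I would reduce to this cohomological statement. Since $X$ is lci, $L_X$ is perfect in $[-1,0]$, and since $X$ is a singular K3 surface it is Gorenstein with $\omega_X\cong\cO_X$. Grothendieck–Serre duality in the derived category (applicable because $L_X$ is perfect and $\omega_X[2]$ is the dualizing complex) gives
\[
\Ext^2(L_X,\cO_X)\cong \Hom_{D(X)}(\cO_X,L_X)^\vee=\bH^0(X,L_X)^\vee.
\]
I would then compute $\bH^0(X,L_X)$ via the hypercohomology spectral sequence $H^p(X,h^q(L_X))\Rightarrow \bH^{p+q}(X,L_X)$. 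Here $h^0(L_X)=\Omega_X$ (as $X$ is normal, hence reduced), while $h^{-1}(L_X)$ vanishes on the smooth locus and is therefore supported on $\mathrm{Sing}(X)$, a finite set by the isolated singularity hypothesis; thus $H^1(X,h^{-1}(L_X))=H^2(X,h^{-1}(L_X))=0$ and $\bH^0(X,L_X)=H^0(X,\Omega_X)$. This reduces the lemma to proving $H^0(X,\Omega_X)=0$. (Equivalently one can run the long exact sequence \eqref{long_ex_seq}: isolated singularities give $H^1(T^1_X)=0$ and lci gives vanishing $T^2$, so $\Ext^2(L_X,\cO_X)$ is a quotient of $H^2(T_X)$.)

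This is where I would use torsion-freeness. Let $\pi:\tilde X\to X$ be the minimal resolution. The canonical map $\pi^*\Omega_X\to\Omega_{\tilde X}$ is adjoint to a morphism $\Omega_X\to\pi_*\Omega^1_{\tilde X}$ which is an isomorphism over the smooth locus $U=X\setminus\mathrm{Sing}(X)$. Its kernel is therefore supported on the finite set $\mathrm{Sing}(X)$, hence is a torsion subsheaf of $\Omega_X$; since $\Omega_X$ is torsion-free the map is injective. Taking global sections yields an injection
\[
H^0(X,\Omega_X)\hookrightarrow H^0(X,\pi_*\Omega^1_{\tilde X})=H^0(\tilde X,\Omega^1_{\tilde X}).
\]

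It then remains to identify $\tilde X$. Since $X$ is lci it is Gorenstein, and a rational Gorenstein surface singularity is canonical, i.e.\ du Val; hence the minimal resolution is crepant, so $K_{\tilde X}=\pi^*K_X=0$, and rationality gives $H^1(\cO_{\tilde X})=H^1(\cO_X)=0$. A smooth projective surface with trivial canonical bundle and irregularity zero is a K3 surface (triviality of $K_{\tilde X}$ already forces minimality), so $H^0(\tilde X,\Omega^1_{\tilde X})=h^{1,0}(\tilde X)=0$. Combining the three steps gives $H^0(X,\Omega_X)=0$, and hence $\Ext^2(L_X,\cO_X)=0$.

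The routine parts are the two duality and spectral-sequence reductions; the conceptual heart, and the step I expect to require the most care, is the geometric input: verifying that rational plus Gorenstein forces du Val singularities with a crepant resolution, and thereby that $\tilde X$ is genuinely K3. I would also emphasize that torsion-freeness is used exactly to guarantee the injectivity of $\Omega_X\to\pi_*\Omega^1_{\tilde X}$; without it one only controls the reflexive hull $\Omega_X^{\vee\vee}$ (Serre-dual to $H^2(T_X)$), for which the vanishing holds unconditionally.
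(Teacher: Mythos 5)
Your proof is correct and follows essentially the same route as the paper's: Serre duality reduces the claim to $H^0(X,\Omega_X)=0$, torsion-freeness gives the injection $H^0(X,\Omega_X)\hookrightarrow H^0(\tilde X,\Omega^1_{\tilde X})$, and rationality of the singularities together with $H^1(\cO_X)=0$ kills the target. The only (harmless) difference is at the last step: you identify the minimal resolution as a K3 surface via the du Val/crepant-resolution argument, whereas the paper works with an arbitrary resolution and gets $H^0(\Omega^1_{\tilde X})=0$ directly from Hodge symmetry $h^{1,0}=h^{0,1}$ plus the Leray spectral sequence, with no need to compute $K_{\tilde X}$.
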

\begin{proof}
  We assume $X$ is reduced. Hence, we have $\Ext^2(L_X,\cO_X)=\Ext^2(\Omega_X,\cO_X)=H^0(\Omega_X)^\vee$. Let $\eps:\tilde X \to X$ be a resolution of singularities. Then $H^0(\Omega_X)\neq 0$ implies that $H^0(\Omega_{\tilde X})\neq 0$, thus $H^1(\cO_{\tilde X})\neq 0$. On $X$ we have an exact sequence \[
 0\to \cO_X \to \eps_*\cO_{\tilde X}\to G\to 0
 \] where $G$ is supported on the singular locus, hence $H^1(G)=0$. The assumption that $X$ has rational singularities is equivalent to $R^1\eps_*\cO_{\tilde X}=0$. By the Leray spectral sequence, $H^1(\eps_*\cO_{\tilde X})\to H^1(\cO_{\tilde X})$ is an isomorphism, therefore $H^1(\cO_X)\neq 0$, against our assumption.
\end{proof}

\section{Smoothability of stable varieties}\label{smoothing_stable_surfaces}

\subsection{On the moduli stack of stable varieties}

The moduli stack  of smooth complex projective varieties of fixed dimension $d$ with  ample dualizing sheaf and fixed numerical invariants is of finite type and its coarse moduli space is a quasiprojective scheme. For $d=1$, it was compactified by Deligne and Mumford as stack of stable curves; each connected component $\overline{\cM}_g$ is smooth and irreducible, and in particular each stable curve is smoothable.

For $d=2$, Koll\'ar and Shepherd-Barron \cite{KSB} first proposed a modular compactification with projective coarse moduli space, parametrizing so called stable surfaces; the missing boundedness was proven by Alexeev in \cite{A}.

This construction was then generalized to  any $d$ (and also pairs of varieties with $\QQ$ divisors,  and more): we refer the reader to \cite{Ko} for a comprehensive presentation.

\begin{Notation} We will fix from here on $d\ge 2$ and suitable numerical invariants, and denote by $\cM$ the moduli stack of smooth projective varieties of dimension $d$ with ample dualizing sheaf and fixed numerical invariants; we will denote by $\overline\cM$ the stack of stable varieties with the same dimension and invariants.
\end{Notation}

\begin{remark}
Defining a stack means defining families. If $B$ is a reduced scheme, a morphism $B\to \overline{\cM}$ is a family of stable varieties, that is a flat proper morphism $S_B\to B$ whose fibers are stable varieties. If $B$ is non-reduced one must additionally require that the family is $\QQ$-Gorenstein, in an appropriate technical sense.
\end{remark}

Note in particular that, for $S$ a stable non Gorenstein variety, the usual deformation functor $\Def_S$ has a subfunctor  $\Def^{QG}_S$ of $\QQ$-Gorenstein deformations, which is canonically isomorphic to $h_{\overline{\cM},[S]}$.

We will take advantage of an alternative definition of $\overline\cM$ due to  Abramovich and Hassett, who in \cite{AH} showed that one could interpret $\overline{\cM}$ as moduli stack of (flat proper families whose fibers are) Gorenstein DM stacks $X$, each of which is the index-one cover of its coarse moduli space $S$, which is a stable variety of fixed dimension and invariants. In particular $\Def_X$ is canonically isomorphic to $h_{\overline{\cM},[S]}$.

We recall here that the index-one cover stack of a stable variety $S$ is a morphism $\eps:X\to S$, where $X$ is a DM stack, such that, for every
affine open $U$ in $S$ and integer $r>0$ such that such that $rK_U\sim \cO_U$, $\eps^{-1}(U)$ is isomorphic over $U$ to the stack quotient $[V/\mu_r]$, where  $V\to U$ is the canonical induced $\mu_r$ Galois cover with $V$ Gorenstein, branched exactly over the non Gorenstein points (which are in codimension $\ge 2$).

In particular, if $S$ is a stable Gorenstein variety, then its index-one cover stack is $\id_S:S\to S$.

\subsection{Smoothability for stable varieties}

\begin{theorem}\label{thm_smoothing_stable_surfaces}
    Let $S$ be a stable variety and $X$ the associated DM stack. Assume that $X$ has lci singularities. If $[S]$ --- or, equivalently, $[X]$ --- is a smooth point of $\overline{\cM}$ and $T^1\Def_X\otimes \cO_X\to T^1_X$ is surjective, then $[S]$ is geometrically smoothable and in the closure of $\cM$.
\end{theorem}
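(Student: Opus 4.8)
The plan is to reduce the statement for the stable variety $S$ to an application of Theorem \ref{main_theorem} (or its stacky version, Proposition \ref{main_thm_stacks}) applied to the DM stack $X$ and the moduli stack $\overline{\cM}$, and then to transfer the resulting smoothing from $X$ back to $S$. First I would set $M:=\overline{\cM}$, take $m_0:=[X]=[S]$, and let $\pi:X_M\to M$ be the universal family of index-one cover stacks over $\overline{\cM}$, so that the fiber over $m_0$ is $X$. By hypothesis $M$ is smooth at $m_0$, the fiber $X$ has lci singularities, and we are told that $T^1\Def_X\otimes\cO_X\to T^1_X$ is surjective. The key identification, supplied by the Abramovich--Hassett description recalled above, is that $\widehat\cO_{\overline{\cM},[S]}$ represents $\Def_X$ (equivalently, $h_{\overline{\cM},[S]}\cong \Def_X^{QG}\cong\Def_X$ under the lci assumption), so the Kodaira--Spencer map $T_{m_0}M\to T^1\Def_X=\Ext^1(L_X,\cO_X)$ is an isomorphism. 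Composing with $\Ext^1(L_X,\cO_X)\to H^0(T^1_X)$ and the evaluation map $H^0(T^1_X)\otimes\cO_X\to T^1_X$ shows that the composite $T_{m_0}M\otimes\cO_X\to T^1_X$ is precisely $T^1\Def_X\otimes\cO_X\to T^1_X$ up to the Kodaira--Spencer isomorphism, hence surjective. This verifies assumption (3) of Theorem \ref{main_theorem}.

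Since $\overline{\cM}$ and its universal family $X_M$ are DM stacks, I would invoke Proposition \ref{main_thm_stacks} rather than Theorem \ref{main_theorem} directly. After passing to the irreducible component $M$ of $\overline{\cM}$ through $m_0$ — which is well-defined once we know $M$ is smooth at $m_0$, so that a unique component contains it — the proposition yields a nonempty open $V\subset M$ over which $\pi$ is smooth. A general point of $V$ therefore parametrizes a smooth $X$, whose coarse space is a smooth stable variety, so $V$ meets the open locus $\cM$ of smooth varieties; this is what puts $[S]$ in the closure of $\cM$. To obtain an honest geometric smoothing I would choose a smooth irreducible curve $C$ with a closed point $c_0$ and a morphism $\phi:C\to M$ sending $c_0\mapsto m_0$ and the generic point of $C$ into $V$, exactly as in the proof of Theorem \ref{smoothable_Fano_gentype}; the base change $X_C:=X_M\times_M C\to C$ is then a geometric smoothing of the stack $X$.

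The remaining, and I expect the most delicate, step is to descend the smoothing of the stack $X$ to a geometric smoothing of the stable variety $S$. The family $X_C\to C$ is a family of DM stacks; taking coarse moduli spaces fiberwise (equivalently, forming the relative coarse space $S_C\to C$) produces a flat proper family of stable varieties whose special fiber is $S$ and whose generic fiber is the coarse space of a smooth $X$, hence a smooth stable variety. The points requiring care are that coarse-space formation commutes with the base change along $C\to M$ and remains flat in this $\QQ$-Gorenstein setting, and that genericity of smoothness passes from the stack to its coarse space — but away from the finitely many fibers where the stack has nontrivial automorphisms or non-Gorenstein points, $X\to S$ is an isomorphism, so over a dense open of $C$ the coarse family agrees with the stack family and inherits smoothness. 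This yields the Cartesian smoothing diagram for $S$ over $C$, completing the proof.
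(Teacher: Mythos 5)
Your proposal is correct and follows essentially the same route as the paper, whose entire proof consists of applying Proposition \ref{main_thm_stacks} to the universal family over the smooth locus of the unique irreducible component of $\overline{\cM}$ through $[S]$, with the Abramovich--Hassett identification $h_{\overline{\cM},[S]}\cong\Def_X$ supplying hypothesis (3) exactly as you describe. Your final paragraph on descending the smoothing from the stack $X_C\to C$ to the coarse family $S_C\to C$ addresses a point the paper leaves implicit, and your justification (the index-one cover is an isomorphism away from a closed locus of codimension $\ge 2$ in each fiber, so the generic fiber of the coarse family is smooth) is sound.
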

\begin{proof}
    We apply Proposition \ref{main_thm_stacks} to the universal family $\cX\to \overline{\cM}$ restricted to the open nonsingular (and nonempty) locus $U$ in the unique irreducible component of $\overline{\cM}$ containing $[S]$.
\end{proof}

\begin{corollary}\label{stable_smoothing} Let $S$ be a stable variety  and $X$ the associated DM stack. Assume that $X$ has lci singularities. If $H^1(T^1_X)=H^2(T_X)=0$ and $T^1_X$ is globally generated, then $[S]$ is in the closure of $\cM$.
\end{corollary}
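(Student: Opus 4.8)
The plan is to deduce Corollary \ref{stable_smoothing} from Theorem \ref{thm_smoothing_stable_surfaces} by verifying its two hypotheses: that $[S]$ is a smooth point of $\overline{\cM}$, and that the evaluation map $T^1\Def_X\otimes\cO_X\to T^1_X$ is surjective. Since $X$ is the index-one cover DM stack of $S$, we have the canonical isomorphism $\Def_X\cong h_{\overline{\cM},[S]}$ recalled before the statement, so proving that $[S]$ is a smooth point of $\overline{\cM}$ is equivalent to proving that the deformation functor $\Def_X$ is unobstructed.

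First I would establish unobstructedness of $\Def_X$. Exactly as in the proof of Theorem \ref{smoothability_criteria}, the obstruction space for $\Def_X$ is $\Ext^2(L_X,\cO_X)$, and I would run the local-to-global spectral sequence for $\EExt$ on the stack $X$. Because $X$ has lci singularities, the sheaf $T^2_X$ vanishes, so the spectral sequence leaves only the contributions $H^2(X,T_X)$ (from $\EExt^0$) and $H^1(X,T^1_X)$ (from $\EExt^1$) feeding into $\Ext^2(L_X,\cO_X)$; both are zero by assumption, giving $\Ext^2(L_X,\cO_X)=0$. Hence $\Def_X$ is unobstructed, and correspondingly $\overline{\cM}$ is smooth at $[S]$. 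I should note that this spectral sequence argument carries over verbatim to the DM stack setting, since the cotangent complex, its $\EExt$ sheaves, and the local-to-global spectral sequence are all available for Deligne--Mumford stacks.

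Next I would check surjectivity of $T^1\Def_X\otimes\cO_X\to T^1_X$. Here $T^1\Def_X=\Ext^1(L_X,\cO_X)$, and the exact sequence \eqref{long_ex_seq} (valid on $X$ since $T^2_X=0$) gives the edge map $\Ext^1(L_X,\cO_X)\to H^0(T^1_X)$ with cokernel landing in $H^2(X,T_X)$. As $H^2(X,T_X)=0$, this map is surjective, so its image is all of $H^0(T^1_X)$. Composing with the evaluation $H^0(T^1_X)\otimes\cO_X\to T^1_X$, which is surjective precisely because $T^1_X$ is globally generated, yields that $T^1\Def_X\otimes\cO_X\to T^1_X$ is surjective. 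This is the same implication chain used in the proof of Theorem \ref{smoothability_criteria}.

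With both hypotheses of Theorem \ref{thm_smoothing_stable_surfaces} verified, that theorem applies and gives that $[S]$ is geometrically smoothable and in the closure of $\cM$, which is the conclusion. I do not expect a genuine obstacle here: the corollary is essentially the specialization of Theorem \ref{smoothability_criteria}'s numerical hypotheses to the stable-variety situation, routed through Theorem \ref{thm_smoothing_stable_surfaces} rather than Theorem \ref{smoothable_Fano_gentype}. The only point requiring mild care is confirming that the $\EExt$ local-to-global spectral sequence and the exact sequence \eqref{long_ex_seq} behave on the DM stack $X$ exactly as on a scheme; this is standard, and the vanishing $T^2_X=0$ from the lci assumption is what makes both arguments go through cleanly.
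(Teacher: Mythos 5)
Your proposal is correct and follows exactly the paper's own argument: use the local-to-global spectral sequence together with $T^2_X=0$ (lci) to get $\Ext^2(L_X,\cO_X)=0$ and the surjectivity of $\Ext^1(L_X,\cO_X)\to H^0(T^1_X)$, combine with global generation of $T^1_X$, and invoke Theorem \ref{thm_smoothing_stable_surfaces}. The only difference is that you spell out the steps the paper leaves implicit.
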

\begin{proof}
By the local to global spectral sequence of $\Ext$, the assumptions guarantee that the functor $\Def_X$ is smooth (since its obstruction space $\Ext^2(L_X,\cO_X)=0$) and that $T^1\Def_X\to H^0(T^1_X)$ is surjective; together with the assumption that $T^1_X$ is generated by global sections, this implies that the assumptions of Theorem \ref{thm_smoothing_stable_surfaces} are satisfied.
\end{proof}

Since $\eps$ is proper and bijective on points, and $\eps_*T_X\to T_S$ is an isomorphism, the condition $H^2(T_X)=0$ can be restated as $H^2(T_S)=0$; similarly, the conditions on $T^1_X$ can be stated for $T^{1,QG}_S:=\eps_*T^1_X$ instead.

In the case where $S$ is Gorenstein (i.e., $X=S$) Corollary \ref{stable_smoothing} is already proven by Nobile \cite[Theorem 5.7]{N}, again as a consequence of Tziolas' work \cite[Theorem 12.5]{Tz}.

Corollary \ref{stable_smoothing} can be proven in full generality by combining Tziolas' result on formal smoothability of $\QQ$-Gorenstein varieties (the second part of \cite[Theorem 12.5]{Tz})  with an extension of Nobile's characterization of formal smoothings \cite[Theorem 5.11]{N} to DM stacks.

\end{document}